\newtheorem{thm}{Theorem}[section]
\newtheorem{lem}[thm]{Lemma}
\newtheorem{prop}[thm]{Proposition}
\theoremstyle{definition}
\theoremstyle{remark}
\theoremstyle{Conjecture}
\numberwithin{equation}{section}
\newtheorem*{remark}{Remark}
\begin{document}

\title{Lower bound of the asymptotic complexity of self-similar fractal graphs}

\author[Konstantinos Tsougkas]{Konstantinos Tsougkas}

\address{Konstantinos Tsougkas\\
         Department of Mathematics\\
         Uppsala university, Sweden}

\email{konstantinos.tsougkas@math.uu.se}

\address{Current address: Department of Mathematics\\
	Cornell university, Ithaca, NY, USA}

\date{\today}

\begin{abstract}
	{
		We study the asymptotic complexity constant of the sequence of approximating graphs to a fully symmetric self-similar structure on a finitely ramified fractal $K$. We show how full symmetry implies existence of the asymptotic complexity constant and obtain a sharp lower bound thereby answering two conjectures by Anema \cite{anema}. 
	}
\end{abstract}

\maketitle

\section{Introduction.}\noindent
An important research topic in graph theory is the enumeration of spanning trees. In recent works, various authors have studied the number of spanning trees on so-called fractal graphs, which are graphs approximating a finitely ramified self-similar set (cf.\ \cite{CCY07,CS06,CW06,SW00,TeWa06}). 

In \cite{anema}, a different methodology from the previous works is given for calculating the number of spanning trees on those fractal graphs based on Kirchhoff's  Matrix--Tree Theorem and the specific knowledge of the spectrum of the probabilistic graph Laplacian on such fractals. The method works due to the so-called spectral decimation property studied in various papers such as \cite{MR2450694},\cite{FS92}, \cite{Sh96} among others. Moreover, in \cite{anema} the asymptotic complexity constant of such graphs is studied, which is defined as the limit of the logarithm of the number of spanning trees over the number of vertices of the graphs. In \cite{TeWa11} the asymptotic complexity constant is studied among others, and precisely evaluated in a closed form for a more general class of self similar fractal graphs satisfying a symmetry condition called strong symmetry. Here we study it initially without any symmetry conditions and later with a symmetry condition which is called full symmetry. Our approach is based on the spectral decimation property thus viewing the subject from a point of view which is closer to analysis on fractals. It is conjectured in \cite{anema} that by adding the assumption of full symmetry we can obtain a proof of the existence of the asymptotic complexity constant using the knowledge of the spectrum of the probabilistic graph Laplacian and based on that, a second conjecture is presented regarding a sharp lower bound of the asymptotic complexity constant. The goal of this paper is to prove these two conjectures (Conjectures $4.3$ and $4.4$ in \cite{anema}) and also obtain an upper bound of the asymptotic complexity constant. Thus we obtain the following result.
 
\begin{thm}
For a given fully symmetric self-similar structure on a finitely ramified fractal $K$, let $G_n$ denote its sequence of approximating graphs. Then the asymptotic complexity constant exists and if $G_0$ has more than two vertices we have that
$$\frac{\log(3)}{2} \leqslant c	_{asymp} \leqslant \log{\left(\frac{(m-1)|V_0|(|V_0|-1)}{|V_1|-|V_0|}\right)}$$
where $|V_0|,|V_1|$ are the number of vertices of the $G_0$ and $G_1$ graphs respectively, and $m$ is the number of contractions that create the self similar fractal graph.

\end{thm}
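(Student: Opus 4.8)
The plan is to translate the enumeration of spanning trees into a spectral computation and then analyse the resulting spectrum through the decimation map. First I would invoke the weighted form of Kirchhoff's Matrix--Tree Theorem adapted to the probabilistic graph Laplacian $\Delta_n = I - P_n$, where $P_n$ is the random-walk transition matrix of $G_n$. Since $\Delta_n$ is conjugate to the symmetric normalized Laplacian, its spectrum is real and the theorem yields
$$\tau(G_n) = \frac{\prod_{v} \deg(v)}{\mathrm{vol}(G_n)} \prod_{\mu \neq 0} \mu,$$
the last product ranging over the nonzero eigenvalues of $\Delta_n$. Dividing $\log \tau(G_n)$ by $|V_n|$ splits the asymptotic complexity into three pieces: a degree term $|V_n|^{-1}\sum_v \log\deg(v)$, a volume term $|V_n|^{-1}\log \mathrm{vol}(G_n)$ which is $\BigO{|V_n|^{-1}\log|V_n|}$ and hence vanishes, and the spectral term $|V_n|^{-1}\sum_{\mu\neq0}\log\mu$. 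Because a fully symmetric structure has only finitely many distinct vertex degrees with proportions that converge, the degree term tends to $\log d$ with $d$ the limiting junction degree, so the whole problem reduces to the spectral term.

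Next I would exploit spectral decimation. Writing $F_n(x) = \det(xI - \Delta_n)$, the decimation property gives a functional relation $F_n(x) = \Phi(x)\, F_{n-1}(R(x))$ for a fixed rational decimation map $R$ and an explicit factor $\Phi$ recording the generation-$n$ exceptional eigenvalues. Since $\prod_{\mu\neq0}\mu = (-1)^{|V_n|-1} \lim_{x\to 0} F_n(x)/x$, this relation turns the spectral term into a recursion; the identity $\prod_{R(z)=\mu} z = (-1)^d(R(0)-\mu)/a$ (with $a$ the leading coefficient and $d$ the degree of $R$) lets me evaluate the contribution of each decimated eigenvalue in closed form. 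Full symmetry is exactly what pins down the multiplicities appearing in $\Phi$ at every level, forcing them to grow geometrically with known ratios, so that the normalized quantity $|V_n|^{-1}\log\abs{F_n'(0)}$ converges. This establishes the existence of $c_{asymp}$ (Conjecture $4.3$).

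For the upper bound I would argue that the spectral term is asymptotically nonpositive. Since $\sum_{\mu}\mu = \mathrm{tr}\,\Delta_n = |V_n|$, the AM--GM inequality gives $\prod_{\mu\neq0}\mu \leq \bigl(|V_n|/(|V_n|-1)\bigr)^{|V_n|-1} < e$, so $|V_n|^{-1}\sum_{\mu\neq0}\log\mu$ is bounded above by $|V_n|^{-1}$ and its limit is at most $0$. Combined with the degree computation this leaves $c_{asymp} \leq \log d$, and a direct count of the junction vertices under full symmetry identifies $d = \tfrac{(m-1)|V_0|(|V_0|-1)}{|V_1|-|V_0|}$, the stated upper bound.

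The lower bound is where the real work lies. Here I must bound the spectral term from below, i.e.\ control how negative $|V_n|^{-1}\sum_{\mu\neq0}\log\mu$ can be; the difficulty is entirely concentrated in the eigenvalues near $0$, which contribute large negative logarithms and are produced, under decimation, as preimages $R^{-n}(\cdot)$ accumulating at the bottom of the spectrum. Using the closed-form recursion from the second step together with the explicit exceptional multiplicities furnished by full symmetry, I would estimate these small-eigenvalue contributions generation by generation and show that the normalized sum stays above $\tfrac{\log 3}{2} - \log d$. The hypothesis $|V_0| > 2$ enters precisely here: it guarantees that each cell carries a genuine cycle, so that the root structure of $R$ forces a uniform per-vertex lower bound, with the extremal configuration saturating $\tfrac{\log 3}{2}$ and thereby proving sharpness (Conjecture $4.4$). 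I expect this small-eigenvalue estimate, rather than the existence or the upper bound, to be the main obstacle.
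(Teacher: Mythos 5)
Your existence and upper-bound steps do track the paper's route: the paper likewise applies Kirchhoff's theorem for the probabilistic Laplacian, notes $\mathrm{tr}\,\Delta_n=|V_n|$ so that by AM--GM the normalized eigenvalue term satisfies $\sum_{j}\log\lambda_j\leqslant(|V_n|-1)\log\bigl(|V_n|/(|V_n|-1)\bigr)\to 1$, hence contributes at most $0$; and it derives existence from the spectral decimation bookkeeping (the multiplicity recursion $\beta_{n+1}^k=\beta_n^{k-1}$, the dimension count $\sum_{\alpha}\alpha_n+\sum_{\beta}\sum_{k=0}^n\beta_n^k d^k+1=|V_n|$, and Stolz--Ces\`aro applied to the resulting first-order linear recurrences). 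Your assertion that full symmetry forces the multiplicities to ``grow geometrically with known ratios'' is precisely the part the paper has to prove, so as written it is a sketch, but the mechanism is right. One correction on the upper bound: the degree term $|V_n|^{-1}\sum_v\log\deg(v)$ does \emph{not} converge to $\log d$ for a single limiting junction degree $d$; fully symmetric examples have several persistent degree classes (the level-$3$ Sierpinski gasket has degrees $2$, $4$ and $6$), so the limit is a weighted average of log-degrees. What is true, and what the paper uses, is Jensen's inequality: the term is at most $|V_n|\log(2EV_n/|V_n|)$, the log of the \emph{average} degree, and $2EV_n=m^n|V_0|(|V_0|-1)$ together with $|V_n|/m^n\to(|V_1|-|V_0|)/(m-1)$ yields exactly the stated bound. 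Your $d$ is the limiting average degree, not a vertex degree, and only an inequality, not a limit, is available for this term.

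The genuine gap is the lower bound, which you locate in entirely the wrong place. Your plan --- to control the eigenvalues near $0$ produced as preimages under $R$ and extract a uniform per-vertex bound from ``the root structure of $R$'' --- has no identified estimate behind it, and there is no reason such small-eigenvalue control would produce the universal constant $\tfrac{\log 3}{2}$, which has nothing to do with the bottom of the spectrum. The paper's lower bound is elementary and completely non-spectral: $G_n$ consists of $m^n$ cells, each a copy of the complete graph $G_0$, so $\tau(G_n)\geqslant\tau\bigl(G_0\vee_{x,x}^{m^n}G_0\bigr)=\tau(G_0)^{m^n}=|V_0|^{(|V_0|-2)m^n}$ by the wedge identity $\tau(G_1\vee G_2)=\tau(G_1)\tau(G_2)$ and Cayley's formula, while connectivity forces at least $m^n-1$ vertex identifications, so $|V_n|\leqslant m^n(|V_0|-1)+1$. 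Dividing gives $\liminf_n\frac{\log\tau(G_n)}{|V_n|}\geqslant\frac{(|V_0|-2)\log|V_0|}{|V_0|-1}$, and minimizing $f(x)=\frac{(x-2)\log x}{x-1}$ over integers $x\geqslant 3$ gives $\tfrac{\log 3}{2}$ at $x=3$. Sharpness is not an extremal spectral configuration but an explicit example: the $3$-tree fractal, where $\tau(G_n)=3^{3^n}$ and $|V_n|=1+2\cdot 3^n$ give exactly $\tfrac{\log 3}{2}$. Incidentally, your correct instinct that $|V_0|>2$ ``guarantees a genuine cycle per cell'' can be made combinatorial rather than spectral --- each $K_3$ cell already has $3$ spanning trees while contributing at most $|V_0|-1=2$ new vertices --- and that is essentially the paper's counting argument in the extremal case; pursuing it through $R^{-n}$ preimages instead leaves the central claim unproved.
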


\begin{remark} 
	If we consider the $m$-Tree fractal in \cite{FS10} we have by Cayley's formula that $\tau(G_0)= m^{m-2}$ and thus we see that $\tau(G_n)=m^{(m-2)m^n}$ and $|V_n|=1+(m-1)m^n$ and thus the asymptotic complexity constant is  $\frac{(m-2)\log{m}}{m-1}$. By considering the $3$-Tree fractal for $m=3$ we observe that the asymptotic complexity constant is $\frac {\log{3}}{2}$ which means that the lower bound is sharp.
\end{remark}

\section{Background notions.} \noindent

First, we present some background material. The discussion here is brief, we refer the reader to \cite{MR2450694,anema, Ki01, St06} for more details. If we have a compact connected metric space $(X,d)$ and $F_i: X \rightarrow X$ are injective contractions for $i=1,2,...\ m,$ then there exists a unique non-empty compact subset $K$ of $X$ that satisfies 
$$K=F_1(K)\cup \cdot \cdot \cdot \cup F_m(K).$$
and $K$ is called the self-similar set with respect to  $\{F_1,F_2,...F_m\}$. If for any two distinct words $w,w' \in W_n=\{1, \dots , m\}^n$ we have that $F_w(K) \cap F_{w'}(K) = F_w(V_0) \cap F_{w'}(V_0)$ then we call $K$ a finitely ramified self similar set. Informally, a finitely ramified self-similar set is such that every cell can be made disconnected with the rest of the set by removing a finite number of points.

For any self-similar finitely ramified set $K$ with respect to $\{F_1,F_2,...F_m\}$, we define the sequence of approximating graphs $G_n$ with vertex set $V_n$ in the following way. For all $n\geq0$ and for all 
$\omega\in W_n$ we have that $G_{0}$ is defined as the complete graph with vertices $V_{0}$ and 
$$V_n:= \bigcup_{i=1}^m F_i(V_{n-1})=\bigcup_{w \in W_n}V_w \;\; \; \text{ and }\; \; \; G_{n}:= \bigcup_{w \in W_n}G_w,$$
where $G_w $ is the complete graph with vertices $V_w$ and  $F_{\omega}:=F_{w_1}\circ F_{w_{2}}\circ\cdots F_{w_n} $ for $\omega=w_1 w_2\cdots w_n$. We have  $x,y\in V_n$ to be connected with an edge in $G_n$ if $F_i^{-1}(x)$ and $F_i^{-1}(y)$ are connected by an edge in $G_{n-1}$ for some $1\leq i\leq m$.

A fully symmetric finitely ramified self-similar structure is a self-similar structure $K$ with contractions $\{F_1,F_2,...F_m\}$ such that $K$ is a finitely ramified self-similar set, and for any permutation $\sigma:V_0\rightarrow V_0$ there is an isometry $g_{\sigma}:K\rightarrow K$ that maps any $x\in V_0$ into $\sigma(x)$ and preserves the self-similar structure of $K$. Then we have a map $\tilde{g_{\sigma}}:W_1\rightarrow W_1$ such that $F_i\circ g_{\sigma}=g_{\sigma}\circ F_{\tilde{g_{\sigma}}(i)}$ $\forall i\in W_1$. 

Then, if $G_n$ for $n\geq0$ is a sequence of finite graphs, denote $|V_n|$ to be the cardinality of $V_n$, and $\tau(G_n)$ denote the number of spanning trees of $G_n$. If the limit
$$\lim_{n\rightarrow\infty} \frac{\log(\tau(G_n))}{|V_n|}$$
exists, it is called the \emph{asymptotic complexity constant} or the \emph{tree entropy} of the sequence $G_n$. We also have that for any two, finite, connected graphs $G_1$, $G_2$, if $G_1\vee_{x_1,x_2} G_2$ denotes the graph formed by identifying the vertex $x_1\in G_1$ with vertex $x_2\in G_2$ then $\forall x_1\in G_1, x_2\in G_2$, and it is clear that
\begin{equation}{\label{eqn:wedge}}
 \tau(G_1\vee_{x_1,x_2} G_2)=\tau(G_1)\cdot \tau(G_2).
\end{equation}

We denote by $\Delta_n$ the probabilistic graph Laplacian of $G_n$ which is defined as $\Delta_n = I-T^{-1}A$ where $T$ is the degree matrix and $A$ is the adjacency matrix. The way to study the spectrum of $\Delta_n$ is by a process called spectral decimation which recursively computes the spectrum of $\Delta_{n+1}$ by using information from the spectrum of $\Delta_n$. It was first studied rigorously on the Sierpinski Gasket by \cite{FS92} and later it was generalised to other fractals. We refer the reader to  \cite{MR2450694, Baj2}. In \cite{MR2450694} we have the following propositions which describe the essence of spectral decimation.

We start with $V_0$ being the complete graph on the boundary set. Write $\Delta_1$ in block form
$$\Delta_1=\begin{pmatrix}A&B\\
C&D\\
\end{pmatrix}$$ 
where A is a square block matrix associated to the boundary points. Since the $V_1$ network never has an edge joining two boundary points A is the $|V_0|\times|V_0|$ identity matrix. 
The Schur Complement of $\Delta_1$ is 
\begin{equation*}
S(z)=(A-zI)-B(D-z)^{-1}C
\end{equation*}

\begin{prop}For a given fully symmetric finitely ramified self-similar structure K there are unique scalar valued rational functions $\phi(z)$ and $R(z)$ such that for $z\notin\sigma(D)$
	\begin{equation*}
	S(z)=\phi(z)(P_0-R(z))
	\end{equation*}
\end{prop}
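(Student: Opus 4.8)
The plan is to use full symmetry to force $S(z)$ into the two-dimensional commutant of the symmetric group and then to solve a small linear system. Write $N=|V_0|$. First I would check that each permutation $\sigma$ of $V_0$ lifts, via the isometry $g_\sigma$, to an automorphism of the graph $G_1$: since $g_\sigma$ preserves the self-similar structure and fixes $V_0$ setwise (acting there as $\sigma$), it permutes $V_1$ and carries the interior vertices $V_1\setminus V_0$ to themselves. In the boundary/interior block decomposition the associated permutation matrix is block diagonal, $\Pi_\sigma=\mathrm{diag}(P_\sigma,Q_\sigma)$, and because the probabilistic Laplacian $\Delta_1$ is invariant under graph automorphisms we have $\Pi_\sigma\Delta_1=\Delta_1\Pi_\sigma$. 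Reading this off blockwise gives $P_\sigma A=AP_\sigma$, $P_\sigma B=BQ_\sigma$, $Q_\sigma C=CP_\sigma$ and $Q_\sigma D=DQ_\sigma$.

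These relations make the Schur complement invariant under $\sigma$. Indeed, since $A=I$ the term $A-zI$ commutes with $P_\sigma$; since $Q_\sigma$ commutes with $(D-zI)^{-1}$, the intertwining identities let $Q_\sigma$ pass through $B(D-zI)^{-1}C$ and cancel, yielding $P_\sigma S(z)P_\sigma^{-1}=S(z)$ for every $\sigma$. Thus $S(z)$ commutes with the whole permutation representation of $S_N$ on $\C^N$. This is the heart of the matter. That representation is the direct sum of the trivial and the standard irreducibles, which are inequivalent, so by Schur's lemma its commutant is spanned by the identity $I$ and the all-ones matrix $J$. Hence $S(z)=a(z)I+b(z)J$ for scalar functions $a,b$, which are rational in $z$ because the entries of $B(D-zI)^{-1}C$ are rational by Cramer's rule.

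Finally I would rewrite this in the desired shape. The probabilistic Laplacian $P_0$ of the complete graph $G_0$ has the form $\alpha I+\beta J$ with $\beta\neq0$, so $\{P_0,I\}$ is a second basis of the commutant. Expanding $\phi(z)\big(P_0-R(z)I\big)=\phi(z)\beta\,J+\phi(z)\big(\alpha-R(z)\big)I$ and matching coefficients against $aI+bJ$ gives $\phi(z)=b(z)/\beta$ and $R(z)=\alpha-a(z)\beta/b(z)$; these are uniquely determined rational functions provided $b\not\equiv0$, and $b\equiv0$ would force $S(z)\equiv0$, which does not occur. I expect the genuine obstacle to lie in the first step — verifying carefully that $g_\sigma$ descends to a graph automorphism with the block-diagonal permutation structure, so that the symmetry is transported to $S(z)$ — after which the argument is pure representation theory and linear algebra.
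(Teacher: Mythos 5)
Your argument is essentially the right one, and it is worth noting that the paper itself offers no proof of this proposition: it is imported from \cite{MR2450694}, and the proof given there is exactly your route. Full symmetry produces block-diagonal permutation matrices $\mathrm{diag}(P_\sigma,Q_\sigma)$ commuting with $\Delta_1$, the intertwining relations pass through the resolvent $(D-zI)^{-1}$, and since the permutation representation of the full symmetric group on $V_0$ splits into the two inequivalent irreducibles you name, Schur's lemma pins $S(z)$ to $\operatorname{span}\{I,J\}=\operatorname{span}\{I,P_0\}$ (here $P_0=\alpha I+\beta J$ with $\beta=-1/(|V_0|-1)\neq 0$), with rationality of the coefficients from Cramer's rule, as you say.

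One implication in your final step is false, however: $b\equiv 0$ would force $S(z)=a(z)I$, not $S(z)\equiv 0$, and if $a\not\equiv 0$ the asserted factorization would genuinely fail (matching coefficients would require $\phi\beta\equiv 0$, hence $\phi\equiv 0$, hence $S\equiv 0$, a contradiction), so you cannot dismiss this case by reducing it to $S\equiv 0$; you must rule out $b\equiv 0$ directly, and uniqueness of $\phi$ and $R$ also hinges on this. The fix is quick. First, $0\notin\sigma(D)$, since $D=I-P_{ii}$ for the walk killed on $V_0$ in the connected graph $G_1$. Writing $\Delta_1=I-T^{-1}A$ in blocks, the boundary block of $T^{-1}A$ vanishes (no edge joins two boundary points), so $(BD^{-1}C)_{xy}$ is the probability that the walk started at $x$ first returns to $V_0$ at $y$; these probabilities sum to $1$ over $y$, so $S(0)=I-BD^{-1}C$ has zero row sums, giving $a(0)+b(0)|V_0|=0$. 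On the other hand, connectivity of $G_1$ and $|V_0|\geq 2$ give a simple path from $x$ to another boundary vertex whose first boundary vertex after $x$ is not $x$, so $(BD^{-1}C)_{xx}<1$ and the diagonal entry satisfies $a(0)+b(0)>0$. Subtracting the two relations yields $b(0)(|V_0|-1)<0$, hence $b(0)\neq 0$ and $b\not\equiv 0$, after which your formulas $\phi=b/\beta$ and $R=\alpha-a\beta/b$ are indeed the unique rational solutions.
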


Now, we let 
\begin{equation*}
E(\Delta_0,\Delta_1):=\sigma(D)\bigcup\{z:\phi(z)=0\}
\end{equation*} 
and call $E(\Delta_0,\Delta_1)$ the exceptional set. 
\\
We denote  $\text{mult}_D(z)$ the multiplicity of $z$ as an eigenvalue of $D$ and $\text{mult}_n(z)$ the multiplicity of $z$ as an eigenvalue of $\Delta_n$. In the case that it's not an eigenvalue, we simply say it has multiplicity zero. Then we may inductively find the spectrum of $\Delta_n$ with the following proposition. 

\begin{prop} If $K$ is a fully symmetric finitely ramified self-similar structure and $R(z),\ \phi(z),\ E(\Delta_0,\Delta_1)$ as above, the spectrum of $\Delta_n$ may be calculated recursively in the following way:
	\begin{enumerate}
		\item if $z\notin E(\Delta_0,\Delta_1)$, then
		\begin{equation*}
		\text{mult}_n(z)=\text{mult}_{n-1}(R(z))
		\end{equation*}
		\item if $z\notin \sigma(D)$, $\phi(z)=0$ and $R(z)$ has a removable singularity at z then,
		\begin{equation*}
		mult_n(z)=|V_{n-1}|
		\end{equation*}
		\item if $z\in \sigma(D)$, both $\phi(z)$ and $\phi(z)R(z)$ have poles at z, $R(z)$ has a removable singularity at z, and $\frac{\partial}{\partial z}R(z)\neq 0$, then
		\begin{equation*}
		\text{mult}_n(z)=m^{n-1}\text{mult}_D(z)-|V_{n-1}|+\text{mult}_{n-1}(R(z))
		\end{equation*}
		\item if $z\in \sigma(D)$, but $\phi(z)$ and $\phi(z)R(z)$ do not have poles at z, and $\phi(z)\neq 0$,then 
		\begin{equation*}
		\text{mult}_n(z)=m^{n-1}mult_D(z)+mult_{n-1}(R(z))
		\end{equation*}
		\item if $z\in \sigma(D)$, but $\phi(z)$ and $\phi(z)R(z)$ do not have poles at z, and $\phi(z)=0$,then 
		\begin{equation*}
		\text{mult}_n(z)=m^{n-1}\text{mult}_D(z)+|V_{n-1}|+\text{mult}_{n-1}(R(z))
		\end{equation*}
		\item if $z\in \sigma(D)$, both $\phi(z)$ and $\phi(z)R(z)$ have poles at z, $R(z)$ has a removable singularity at z, and $\frac{\partial}{\partial z}R(z)=0$, then
		\begin{equation*}
		\text{mult}_n(z)=m^{n-1}\text{mult}_D(z)-|V_{n-1}|+2\text{mult}_{n-1}(R(z))
		\end{equation*}
		\item if $z\notin \sigma(D)$, $\phi(z)=0$ and $R(z)$ has a pole at z, then $\text{mult}_n(z)=0$.
		\item if $z\in\sigma(D)$, but $\phi(z)$ and $\phi(z)R(z)$ do not have poles at z, $\phi(z)=0$ and $R(z)$ has a pole at z, then 
		\begin{equation*}
		\text{mult}_n(z)=m^{n-1}\text{mult}_D(z).
		\end{equation*}
	\end{enumerate}
\end{prop}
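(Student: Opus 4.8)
The plan is to compute $\text{mult}_n(z)$ as the order of vanishing of the characteristic polynomial $\det(\Delta_n-zI)$ at $z$, and to extract that order from a Schur factorization that exploits the nested self-similar structure of the graphs $G_n$. Each level-$(n-1)$ cell $F_w(K)$, $w\in W_{n-1}$, carries a copy of the level-$1$ graph $G_1$, so the vertices of $V_n\setminus V_{n-1}$ interior to these cells play, cell by cell, the role of the interior block $D$ of $\Delta_1$. By finite ramification the intersection of two distinct level-$(n-1)$ cells lies in $V_{n-1}$, hence these interior vertices are never shared between cells and their joint block is the direct sum of $m^{n-1}$ copies of $D$. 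Eliminating them via the Schur complement and invoking Proposition $2.2$ in each cell, the per-cell building block $\phi(z)(P_0-R(z))$ assembles into the global operator $\phi(z)\bigl(\Delta_{n-1}-R(z)I\bigr)$ on $V_{n-1}$. Writing $\chi_{n-1}(\lambda)=\det(\Delta_{n-1}-\lambda I)$, this yields the factorization
\[
\det(\Delta_n-zI)=\det(D-zI)^{m^{n-1}}\,\phi(z)^{|V_{n-1}|}\,\chi_{n-1}\!\bigl(R(z)\bigr),
\]
valid as an identity of rational functions, with the understanding that at points of $\sigma(D)$ the right-hand side must be regularized because $\det S(z)$ inherits poles from $(D-z)^{-1}$.

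First I would read off the generic case. For $z\notin E(\Delta_0,\Delta_1)$ we have $\det(D-zI)\neq0$ and $\phi(z)\neq0$, so the order of vanishing of $\det(\Delta_n-zI)$ equals that of $\chi_{n-1}(R(z))$. If $R(z_0)$ is an eigenvalue of $\Delta_{n-1}$ of multiplicity $\mu$, then $\chi_{n-1}(R(z))$ vanishes to order $\mu\cdot\text{ord}_{z_0}\bigl(R(z)-R(z_0)\bigr)$; since $R'(z_0)\neq0$ generically this order is $\mu$, giving $\text{mult}_n(z)=\text{mult}_{n-1}(R(z))$, which is case $(1)$. The two cases with $z\notin\sigma(D)$ and $\phi(z)=0$ are then immediate from the factor $\phi(z)^{|V_{n-1}|}$: if $R$ is regular at $z$ the factor contributes exactly $|V_{n-1}|$ to the order, giving case $(2)$, whereas if $R$ has a pole at $z$ it forces $\chi_{n-1}(R(z))$ to vanish to a matching negative order and the net contribution is zero, giving case $(7)$.

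Next I would treat the cases $z\in\sigma(D)$, where $\det(D-zI)^{m^{n-1}}$ vanishes to order $m^{n-1}\text{mult}_D(z)$ while $\phi(z)$ and $\phi(z)R(z)$ may have poles coming from $(D-z)^{-1}$. The substance of cases $(3)$–$(6)$ and $(8)$ is to track the cancellation of these poles against the zero of $\det(D-zI)$, combined with the behavior of $R$ at $z$. Whether $\phi$ and $\phi R$ have poles, whether $\phi(z)=0$, and whether $R$ has a removable singularity or a pole, determine how the factor $\phi(z)^{|V_{n-1}|}$ and the regularized $\chi_{n-1}(R(z))$ contribute, producing the $\pm|V_{n-1}|$ corrections. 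The derivative condition enters through the chain rule exactly as above: when $R'(z)\neq0$ the pullback of the level-$(n-1)$ eigenvalue is simple (case $(3)$), while $R'(z)=0$ signals an order-two critical point of $R$, so $R(z)-R(z_0)$ vanishes to order two and the transferred multiplicity doubles to $2\,\text{mult}_{n-1}(R(z))$, which is case $(6)$.

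The main obstacle will be precisely the bookkeeping in the $z\in\sigma(D)$ cases. There the clean determinantal factorization degenerates into a product of a polynomial with zeros and a rational function with poles, and one must verify that the pole orders of $\det S(z)$ cancel exactly against $\det(D-zI)^{m^{n-1}}$ and quantify the residual eigenspace directly, rather than through determinants alone. This amounts to a careful dimension count that separates the $m^{n-1}\text{mult}_D(z)$ localized Dirichlet eigenfunctions from the globally supported eigenfunctions contributed by $\phi(z)=0$ and by $\Delta_{n-1}$, and it is here — in getting the signs of the $|V_{n-1}|$ corrections and the factor of two correct — that the real work lies; the generic case $(1)$, by contrast, follows almost formally from the factorization.
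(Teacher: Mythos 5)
You should first be aware that the paper contains no proof of this proposition: it is imported verbatim as background from \cite{MR2450694}, where the argument in turn rests on Malozemov's solvable model \cite{allodentro} and Malozemov--Teplyaev. So the comparison is against that cited literature, which in fact starts exactly where you do: finite ramification makes the interior block of $\Delta_n$ a direct sum of $m^{n-1}$ copies of $D$, and full symmetry makes the Schur complement on $V_{n-1}$ assemble into $\phi(z)(\Delta_{n-1}-R(z)I)$ (this is the \emph{first} displayed proposition, the identity $S(z)=\phi(z)(P_0-R(z))$, not ``Proposition 2.2'' as you cite it). Your factorization
\[
\det(\Delta_n-zI)=\pm\det(D-zI)^{m^{n-1}}\,\phi(z)^{|V_{n-1}|}\,\chi_{n-1}\bigl(R(z)\bigr)
\]
is a correct rational-function identity, and order-of-vanishing does compute $\text{mult}_n$ --- but only because $\Delta_n=I-T^{-1}A$ is similar to a symmetric matrix and hence diagonalizable, so algebraic and geometric multiplicities agree; you use this silently and should state it.

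The genuine gap is that the determinant identity alone cannot yield the eight formulas, and your own text concedes this. The order at $z_0$ is $m^{n-1}\text{mult}_D(z_0)+|V_{n-1}|\,\mathrm{ord}_{z_0}\phi+\text{mult}_{n-1}(R(z_0))\cdot\mathrm{ord}_{z_0}\bigl(R(z)-R(z_0)\bigr)$, so matching the proposition needs quantitative facts the hypotheses do not supply: that the pole of $\phi$ in cases $(3)$ and $(6)$ is exactly simple, that the zero of $\phi$ in case $(5)$ is exactly simple, that $\mathrm{ord}_{z_0}\phi$ exactly cancels the pole order of $R$ in cases $(7)$--$(8)$, and that $R'(z_0)\neq0$ in cases $(1)$ and $(4)$ --- neither of which carries a derivative hypothesis, so your ``since $R'(z_0)\neq0$ generically'' is not a proof: case $(1)$ is asserted for \emph{every} $z\notin E(\Delta_0,\Delta_1)$. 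The cited proofs avoid all of this by arguing on eigenspaces rather than determinants: for $z\notin\sigma(D)$ the restriction map $v\mapsto v|_{V_{n-1}}$ is a linear bijection from $\ker(\Delta_n-z)$ onto $\ker S_n(z)=\ker\phi(z)(\Delta_{n-1}-R(z))$, with interior values reconstructed as $-(D_n-z)^{-1}C_nv|_{V_{n-1}}$; this gives $(1)$, $(2)$ and $(7)$ with no derivative condition at all. For $z\in\sigma(D)$ one counts the localized (Dirichlet) eigenfunctions supported in cell interiors against the solvability constraints for extending level-$(n-1)$ eigenfunctions, and it is this Fredholm-type dimension count --- not pole bookkeeping in a determinant --- that produces the $\pm|V_{n-1}|$ corrections and the doubling $2\,\text{mult}_{n-1}(R(z))$ when $R'(z)=0$. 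You explicitly defer precisely this step (``it is here that the real work lies''), but that step \emph{is} the content of cases $(3)$--$(6)$ and $(8)$: as written, your proposal is a correct framing plus the generic case, not a proof of the proposition.
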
 

As in the notation of \cite{anema} it is possible to define sets of eigenvalues $A,B$ and corresponding multiplicities for $\alpha\in A$, $\alpha_n:=\text{mult}_n(\alpha)$ and for $\beta \in B$, $\beta_n^k:=\text{mult}_n(R_{(-k)}(\beta))$.

Then the spectral decimation algorithm means that the spectrum of the probabilistic graph Laplacian of level $n$ is
\begin{equation} \label{eq:spectrum}
\sigma(\Delta_n)\setminus\{0\}=\bigcup_{\alpha\in A} \left\{\alpha\right\} \bigcup_{\beta\in B}\left[\bigcup_{k=0}^{n}\bigl\{R_{-k}(\beta):\beta_n^k\neq0\bigr\}\right].
\end{equation}

A famous theorem in graph theory is Kirchhoff's Matrix-Tree Theorem which relates the number of spanning trees of a graph with the product of the eigenvalues of its Laplacian. A version of this theorem for the probabilistic graph Laplacian is the following.

\begin{thm} [Kirchhoff's theorem] For any connected, loopless graph $G$ with $n$ labelled vertices, the number of spanning trees of $G$ is
$$\tau(G)=\left|\frac{\left(\prod\limits_{j=1}^n d_j\right)}{\left(\sum\limits_{j=1}^n d_j\right)} \prod_{j=1}^{n-1} \lambda_j \right| $$	where $\lambda_j$ are the non-zero eigenvalues of the probabilistic graph Laplacian.
\end{thm}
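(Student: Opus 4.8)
The plan is to reduce this normalized statement to the classical Matrix--Tree Theorem for the combinatorial Laplacian $L = T - A$, and then track how the normalization by the degree matrix $T$ affects the product of the nonzero eigenvalues. The starting observation is that $\Delta = I - T^{-1}A = T^{-1}(T-A) = T^{-1}L$, and that for a connected loopless graph $L$ is symmetric, positive semidefinite, and singular with one-dimensional kernel spanned by the all-ones vector $\mathbf 1$. Since the row sums of $A$ are the degrees, $A\mathbf 1 = T\mathbf 1$, so $\mathbf 1$ also lies in the kernel of $\Delta$; hence $0$ is a simple eigenvalue of both $L$ and $\Delta$, and the remaining $n-1$ eigenvalues $\lambda_1,\dots,\lambda_{n-1}$ of $\Delta$ are exactly the nonzero ones appearing in the formula.

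First I would record the two algebraic facts I need. From the classical Matrix--Tree Theorem every cofactor of $L$ equals $\tau(G)$; since $L$ is symmetric and singular with kernel $\mathbf 1$, the adjugate satisfies $L\,\mathrm{adj}(L) = \det(L)\,I = 0$, which forces each column of $\mathrm{adj}(L)$ to be a scalar multiple of $\mathbf 1$, so $\mathrm{adj}(L) = \tau(G)\,J$, where $J$ is the all-ones matrix and the scalar is pinned down by the diagonal cofactors. Next I would compare characteristic polynomials: writing $P(x) = \det(xI - \Delta)$ and using $\Delta = T^{-1}L$ gives $xI - \Delta = T^{-1}(xT - L)$, hence $P(x) = \det(T)^{-1}\det(xT - L)$, so with $R(x) = \det(xT - L)$ we have $P(x) = R(x)/\prod_j d_j$.

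The key step is to read off the product of the nonzero eigenvalues from the first-order behaviour at $x=0$. Because $0$ is simple for $\Delta$, we may write $P(x) = x\prod_{j=1}^{n-1}(x-\lambda_j)$, whence $P'(0) = (-1)^{n-1}\prod_{j=1}^{n-1}\lambda_j$. On the other hand, Jacobi's formula applied to $M(x) = xT - L$ (with $M'(x) = T$) gives $R'(0) = \mathrm{tr}\bigl(\mathrm{adj}(-L)\,T\bigr) = (-1)^{n-1}\mathrm{tr}\bigl(\mathrm{adj}(L)\,T\bigr) = (-1)^{n-1}\tau(G)\,\mathrm{tr}(JT)$, and since $T$ is the diagonal degree matrix, $\mathrm{tr}(JT) = \sum_j d_j$. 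Differentiating $P = R/\prod_j d_j$ at $0$ and equating the two expressions for $P'(0)$ yields $\prod_{j=1}^{n-1}\lambda_j = \tau(G)\,(\sum_j d_j)/(\prod_j d_j)$, which rearranges to the claimed identity; the absolute value is harmless since all quantities are positive for a connected graph.

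I expect the main obstacle to be conceptual rather than computational: one cannot simply multiply the classical eigenvalue product by a normalization factor, because passing from $L$ to $T^{-1}L$ is not a similarity that preserves individual eigenvalues and $\Delta$ is not symmetric, so the two eigenvalue lists are genuinely different. The device that makes everything go through is extracting $\prod_j\lambda_j$ from the linear coefficient of the characteristic polynomial and combining Jacobi's formula with the adjugate form $\mathrm{adj}(L) = \tau(G)J$ of the Matrix--Tree Theorem, so that the degree data enters cleanly in two places: the numerator through $\mathrm{tr}(JT) = \sum_j d_j$ and the denominator through $\det(T)^{-1} = (\prod_j d_j)^{-1}$.
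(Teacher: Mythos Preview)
The paper does not prove this statement; Kirchhoff's theorem in the probabilistic-Laplacian form is quoted as background and then used as a tool (see the sentence immediately after the theorem referring to \cite{anema}). There is thus no authors' argument to compare against.

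Your derivation from the classical Matrix--Tree Theorem is correct and efficient. One point deserves an extra line: you assert that $0$ is a \emph{simple} eigenvalue of $\Delta$, but the reasoning given (that $\ker\Delta=\ker L$ because $T$ is invertible) only pins down the geometric multiplicity. Since $\Delta=T^{-1}L$ is not symmetric, one should say why algebraic and geometric multiplicities agree. The fix is immediate: $T^{1/2}\Delta T^{-1/2}=T^{-1/2}LT^{-1/2}$ is symmetric, so $\Delta$ is diagonalizable with real spectrum and the multiplicities coincide. With that remark in place, your chain
\[
P'(0)=\frac{R'(0)}{\prod_j d_j},\qquad R'(0)=\mathrm{tr}\bigl(\mathrm{adj}(-L)\,T\bigr)=(-1)^{n-1}\tau(G)\,\mathrm{tr}(JT)=(-1)^{n-1}\tau(G)\sum_j d_j,
\]
together with $P'(0)=(-1)^{n-1}\prod_{j=1}^{n-1}\lambda_j$, yields the identity exactly as claimed.
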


Kirchhoff's theorem is used in \cite{anema} to compute the number of spanning trees of the approximating graphs of a given fully symmetric self-similar structure on a finitely ramified fractal $K$. Let $R(z)=\frac{P(z)}{Q(z)}$ with $deg(P(z))> deg(Q(z))$,  $d$ be the degree of $R(z)$, $P_d$ be the leading coefficient of the numerator of $R(z)$, $|V_n|$ be the number of vertices of $G_n$ and $d_j$ be the degree of vertex $j$ in $G_n$. Then the number of spanning trees is given by 
\begin{equation} \label{eq:1}
\tau(G_n)=\left|\frac{\left(\prod_{j=1}^{|V_n|} d_j \right)}{\left(\sum_{j=1}^{|V_n|}d_j \right)}\left(\prod_{\alpha\in A}\alpha^{\alpha_n}\right)\left[\prod_{\beta\in B}\left(\beta^{\sum_ {k=0}^{n}{\beta_n^k}}\left(\frac{-Q(0)}{P_d}\right)^{\sum_{k=0}^n\beta_n^k\left(\frac{d^k-1}{d-1}\right)}\right)\right]\right|.
\end{equation}
We will also make use of the Stolz-Ces\`{a}ro lemma which states the following.

\begin{lem}
	
Let $(a_n)_n$ and $(b_n)_n$ be sequences of real numbers such that $(b_n)_n$ is strictly monotone and divergent to $+ \infty$ or $-\infty$. If we have that the following limit exists 
$$\lim_{n \rightarrow \infty} \frac{a_{n+1}-a_n}{b_{n+1}-b_n}=c,$$ then we have that  	$\lim_{n \rightarrow \infty} \frac{a_n}{b_n}=c$
\end{lem}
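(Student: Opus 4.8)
The plan is to mimic the proof of L'H\^opital's rule at the level of sequences, via a telescoping estimate. First I would reduce to a single canonical case: replacing $(a_n,b_n)$ by $(-a_n,-b_n)$ leaves both the difference quotient $\frac{a_{n+1}-a_n}{b_{n+1}-b_n}$ and the ratio $\frac{a_n}{b_n}$ unchanged, while turning a strictly decreasing sequence diverging to $-\infty$ into a strictly increasing one diverging to $+\infty$. Hence I may assume without loss of generality that $(b_n)$ is strictly increasing with $b_n\to+\infty$; in particular $b_{n+1}-b_n>0$ for all $n$ and $b_m>0$ for all large $m$.

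Treating first the case where $c$ is finite, I would fix $\varepsilon>0$. By hypothesis there is an index $N$ such that for every $n\geq N$
$$(c-\varepsilon)(b_{n+1}-b_n) < a_{n+1}-a_n < (c+\varepsilon)(b_{n+1}-b_n),$$
where the positivity of $b_{n+1}-b_n$ is what lets me clear the denominator without reversing the inequalities. Summing telescopically from $n=N$ to $n=m-1$ then gives, for all $m>N$,
$$(c-\varepsilon)(b_m-b_N) < a_m-a_N < (c+\varepsilon)(b_m-b_N).$$

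Dividing by $b_m>0$ and writing $\frac{a_m}{b_m}=\frac{a_N}{b_m}+\frac{a_m-a_N}{b_m}$, I would let $m\to\infty$. Since $a_N$ and $b_N$ are fixed while $b_m\to+\infty$, the quantities $\frac{a_N}{b_m}$ and $\frac{b_N}{b_m}$ tend to $0$, so that $\frac{b_m-b_N}{b_m}\to1$. Passing to $\liminf$ and $\limsup$ in the displayed double inequality yields
$$c-\varepsilon \leq \liminf_{m\to\infty}\frac{a_m}{b_m} \leq \limsup_{m\to\infty}\frac{a_m}{b_m} \leq c+\varepsilon,$$
and since $\varepsilon>0$ was arbitrary the two one-sided limits coincide with $c$, proving $\lim_{m\to\infty}\frac{a_m}{b_m}=c$.

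The only genuine subtlety, and the step I would be most careful about, is the reduction to the monotone-increasing case together with the sign bookkeeping when clearing the positive denominator $b_{n+1}-b_n$; everything downstream is a routine telescoping estimate. If one also wishes to cover $c=\pm\infty$, the same telescoping bound works with a one-sided inequality: for $c=+\infty$, given $M>0$ choose $N$ so that $a_{n+1}-a_n>M(b_{n+1}-b_n)$ for $n\geq N$, sum to get $a_m-a_N>M(b_m-b_N)$, and divide by $b_m$ to obtain $\liminf_{m}\frac{a_m}{b_m}\geq M$; letting $M\to\infty$ gives the claim, and the case $c=-\infty$ follows by negation.
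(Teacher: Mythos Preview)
Your proof is correct and is the standard telescoping argument for the Stolz--Ces\`aro lemma. Note, however, that the paper does not supply its own proof of this statement: it merely quotes the lemma as a known tool (``We will also make use of the Stolz--Ces\`aro lemma which states the following'') and then invokes it in the proof of the main theorem. There is therefore no paper proof to compare against; you have simply filled in a classical result that the paper takes for granted, and you have done so correctly.
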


Now, we are ready to present the proof of the main theorem.
\section{Proof of the conjectures.}\noindent 
\noindent

As before, let $G_n$ be the sequence of approximating graphs, $|V_n|$ the number of its vertices, $m$ the number of $0$-cells of the $G_1$ graph and $\tau(G_n)$ be the number of spanning trees. Due to the self-similarity we have that the $G_n$ graph is $m^n$ copies of the $G_0$ graph with an appropriate identification of its vertices. In fact, in \cite{TeWa11} we have the formula $|V_n|=m|V_{n-1}|-m|V_0|+|V_1|$ from which we can derive that 
$$|V_n|=\frac{m^n(|V_1|-|V_0|)+m|V_0|-|V_1|}{m-1}.$$ Thus we can see that $\lim_{n \rightarrow \infty} \frac{|V_n|}{m^n}=\frac{|V_1|-|V_0|}{m-1}$. To simplify notation later, we denote this as $|V_n| \sim m^n$.

We give a proof as to why the asymptotic complexity constant is positive and give an upper bound. 

\begin{prop}
Let $K$ be a self similar, finitely ramified fractal and $G_n$ be its sequence of approximating graphs. If $G_1$ is not a tree we have that
$$0<\liminf_{n \rightarrow \infty} \frac{\log{\tau(G_n)}}{|V_n|} \leqslant \limsup_{n \rightarrow \infty} \frac{\log{\tau(G_n)}}{|V_n|}	\leqslant \log{\left(\frac{(m-1)|V_0|(|V_0|-1)}{|V_1|-|V_0|}\right)}$$
	
\end{prop}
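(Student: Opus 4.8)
The plan is to treat the two inequalities separately, using only elementary spectral and combinatorial facts together with the asymptotic $|V_n|\sim m^n$ recorded above.

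For the upper bound I would apply Kirchhoff's theorem in the form stated above and control the product of eigenvalues by the arithmetic--geometric mean inequality. Writing $N=|V_n|$ and letting $\lambda_1,\dots,\lambda_{N-1}$ be the nonzero eigenvalues of $\Delta_n=I-T^{-1}A$, observe that $\sum_j\lambda_j=\operatorname{tr}(\Delta_n)=N$ since $T^{-1}A$ has zero diagonal; hence AM--GM gives $\prod_j\lambda_j\le\bigl(N/(N-1)\bigr)^{N-1}$, and combining this with $\prod_j d_j\le(\sum_j d_j/N)^{N}$ yields the clean estimate
\begin{equation*}
\tau(G_n)\le\frac{1}{N}\left(\frac{\sum_{j=1}^{N} d_j}{N-1}\right)^{N-1}.
\end{equation*}
Since $G_n$ is the union of $m^n$ copies of the complete graph on $V_0$, its edge set has at most $m^n\binom{|V_0|}{2}$ elements, so $\sum_j d_j=2|E_n|\le m^n|V_0|(|V_0|-1)$. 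Taking logarithms, dividing by $N=|V_n|$ and letting $n\to\infty$, the factors $-\tfrac{1}{N}\log N$ and $\tfrac{N-1}{N}$ tend to $0$ and $1$, while $m^n/(|V_n|-1)\to (m-1)/(|V_1|-|V_0|)$; this produces exactly the claimed value $\log\bigl((m-1)|V_0|(|V_0|-1)/(|V_1|-|V_0|)\bigr)$ as an upper bound for the $\limsup$.

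For the lower bound I would exploit the self-similar tiling $G_n=\bigcup_{w\in W_{n-1}}F_w(G_1)$, which presents $G_n$ as an edge-disjoint union of $m^{n-1}$ copies of $G_1$, any two of which meet only in boundary points. Since deleting edges can only decrease the number of spanning trees, it suffices to produce many spanning trees by an independent local count. The key tool is a gluing estimate generalizing \eqref{eqn:wedge}: if $G=P\cup C$ with $P,C$ connected and edge-disjoint and $V(P)\cap V(C)=S\neq\emptyset$, then $\tau(G)\ge\tau(P)\,\tau(C/S)$, where $C/S$ denotes $C$ with the vertex set $S$ contracted to a single point. This is proved by attaching to any spanning tree of $P$ the forest of $C$ associated with any spanning tree of $C/S$; distinct pairs give distinct spanning trees of $G$. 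Adding the cells one at a time in an order for which each new copy $C_j$ meets the union $P_{j-1}$ of the previous ones in a nonempty set $S_j$, this gives
\begin{equation*}
\tau(G_n)\ \ge\ \tau(G_1)\prod_{j=2}^{m^{n-1}}\tau(C_j/S_j).
\end{equation*}
Because $G_1$ is not a tree, each copy contains a cycle, and for at least a fixed positive fraction $\delta$ of the cells the contraction $C_j/S_j$ still carries a cycle, so $\tau(C_j/S_j)\ge2$ there. Hence $\log\tau(G_n)\ge\delta\,m^{n-1}\log 2$, and dividing by $|V_n|\sim m^n$ shows $\liminf_n\log\tau(G_n)/|V_n|>0$.

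The main obstacle is precisely this last step: guaranteeing that the multi-point gluings of a finitely ramified structure do not collapse the cells' cycles, i.e.\ that $\tau(C_j/S_j)\ge2$ for a positive fraction of $j$. The difficulty is that, unlike the single-point wedge of \eqref{eqn:wedge}, identifying two boundary vertices can both create and destroy cycles, so one must check that contracting the (proper) boundary subset $S_j$ of a copy of $G_1$ leaves its cyclomatic number positive. This is where the hypothesis that $G_1$ is not a tree, together with the presence of interior vertices in every cell for $n\ge 2$, is used decisively, and it is the part of the argument I expect to require the most care to make uniform in $n$.
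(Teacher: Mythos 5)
Your upper bound is correct and is essentially the paper's own argument: Kirchhoff's theorem, the AM--GM/Jensen bound on the degree product, the bound $\prod_j\lambda_j\le\bigl(|V_n|/(|V_n|-1)\bigr)^{|V_n|-1}$ coming from $\operatorname{tr}\Delta_n=|V_n|$, the edge count $2|E_n|\le m^n|V_0|(|V_0|-1)$, and the limit $|V_n|/m^n\to(|V_1|-|V_0|)/(m-1)$; the only (harmless) difference is that the paper asserts the edge count with equality while you use an inequality, which suffices.

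The lower bound, however, contains a genuine gap, and it is precisely the step you flag at the end: nothing in your argument produces a fixed fraction $\delta>0$ of cells with $\tau(C_j/S_j)\ge 2$. Your gluing lemma $\tau(P\cup C)\ge\tau(P)\,\tau(C/S)$ is correct, and the first Betti number of $C_j/S_j$ is $E(G_1)-|V_1|+|S_j|\ge 1$ since $G_1$ is not a tree, but that cyclomatic surplus can be carried entirely by \emph{loops}, created from edges of the cell joining two vertices of $S_j$, and loops contribute nothing to the spanning-tree count; a priori $\tau(C_j/S_j)=1$ could hold for every $j\ge2$, in which case your product bound degenerates to $\tau(G_n)\ge\tau(G_1)$ and gives $\liminf=0$. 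Because you decompose into copies of $G_1$, whose boundary-to-boundary edge structure the hypotheses do not control, the $\delta$-claim is unsupported and the proof is incomplete as written. The paper avoids this entirely with a cruder comparison: $\tau(G_n)\ge\tau\bigl(G_0\vee_{x,x}^{m^n}G_0\bigr)=\tau(G_0)^{m^n}$ (Anema's inequality, in the spirit of \eqref{eqn:wedge}), so Cayley's formula gives $\log\tau(G_n)\ge m^n(|V_0|-2)\log|V_0|$ when $|V_0|>2$; when $|V_0|=2$ it instead writes $G_n$ as $m^{n-1}$ copies of $G_1$, which contains a cycle, so $\tau(G_1)\ge3$ and $\tau(G_n)\ge 3^{m^{n-1}}$. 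Your route can in fact be repaired by the same case split: decompose into copies of $G_0=K_{|V_0|}$ rather than $G_1$. For such cells the local factors are controlled by pure combinatorics of complete graphs: if $2\le|S_j|\le|V_0|-1$ the contraction contains parallel edges, hence $\tau(C_j/S_j)\ge2$; if $|S_j|=1$ the factor is $\tau(K_{|V_0|})=|V_0|^{|V_0|-2}\ge3$; and the count $\sum_{j\ge2}\bigl(|V_0|-|S_j|\bigr)=|V_n|-|V_0|\sim m^n(|V_1|-|V_0|)/(m-1)$ shows that $|S_j|\le|V_0|-1$ holds for at least $c\,m^n$ cells, supplying your fraction $\delta$ when $|V_0|\ge3$. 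The case $|V_0|=2$ then needs the separate $G_1$-argument exactly as in the paper: there $|S_j|\le2$, and no loop can arise because an edge of $G_1$ between its two boundary points would force some contraction $F_i$ to map the two-point boundary onto itself, which is impossible; hence the cycle of $G_1$ survives contraction and $\tau(C_j/S_j)\ge2$ for every cell.
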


\begin{remark}
The first inequality, that it is greater than zero, is proven in \cite{anema}, we repeat the argument here for the convenience of the reader. We make one modification however. The author asserts that if we assume that $G_1$ is not a tree then it implies that $|V_0|>2$. However, this is not the case. There are self similar fractal graphs such that $G_0$ is the complete graph on two vertices and $G_1$ is not a tree. Examples of that are the graphs studied in \cite{2frac} and the so called Austria graphs in \cite{TeWa11}. It is interesting then to see what happens in the case that $|V_0|=2$. Then we may have that $G_1$ is a tree and thus the number of spanning trees is trivially $1$ and therefore the asymptotic complexity constant is $0$. However if we assume that $G_1$ is not a tree, then we will see that a very similar argument to the previous case proves that even if $|V_0|=2$ we have a positive lower bound.  
\end{remark}

\begin{proof}
We have that  $\tau(G_n)\geq \tau(G_0\vee_{x,x}^{m^n}G_0)$ where $G_0\vee_{x,x}^{m^n}G_0$ denotes $m^n$ copies of $G_0$ each identified to each other at some vertex $x\in V_0$. Assume first that $|V_0|>2$. Then, since the $G_0$ graph is the complete graph on $|V_0|$ vertices, by Cayley's formula we have that $\tau(G_0)=|V_0|^{(|V_0|-2)}$. Thus we get that $$\tau(G_0\vee_{x,x}^{m^n}G_0)=|V_0|^{(|V_0|-2)\cdot m^n}$$ and 
$$ \tau(G_n)\geq |V_0|^{(|V_0|-2)\cdot m^n}.$$ So for $n\geq0$, 
\begin{equation}\label{eqn:lowerasy}
\log(\tau(G_n))\geq m^n\cdot (|V_0|-2)\log(|V_0|)
\end{equation} 
which is equation (6) in \cite{anema}. This gives us the result since $|V_n| \sim m^n$. In the case that $|V_0|=2$ we can say the following. Since $G_1$ is not a tree, it must be that $m>2$ and $G_1$ must contain a cycle and of course $|V_1|>2$. Then using the same argument as before, we have that $G_n$ is $m^{n-1}$ copies of $G_1$ with appropriate vertex identification and   $\tau(G_n)\geq \tau(G_1\vee_{x,x}^{m^{n-1}}G_1)$ for some vertex $x \in V_1$. Since $G_1$ contains a cycle, it must have at minimum $3$ spanning trees which gives us that $ \tau(G_n)\geq 3^{m^{n-1}}$ and we obtain the lower bound exactly as before.

Now, for the upper bound. First we observe that if we denote $EV_n$ the cardinality of the edge set of $G_n$ then we have that $EV_n=\frac{m^n |V_0| (|V_0|-1)}{2}$. This can be seen from the self similarity of the graph and the fact that $G_0$ is the complete graph on $V_0$ vertices. Also, we have from Kirchhoff's theorem that 
$$ \log{\tau(G_n)}=\log{\frac{\left(\prod\limits_{j=1}^{|V_n|} d_j\right)}{\left(\sum\limits_{j=1}^{|V_n|} d_j\right)}}+\log{ \prod_{j=1}^{V_n-1} \lambda_j}  $$
The first summand becomes $\sum\limits_{j=1}^{|V_n|} \log{d_j}-\log{\sum\limits_{j=1}^{|V_n|} d_j}$ and by using Jensen's inequality, we obtain that
\begin{equation*}
\begin{split}
\sum\limits_{j=1}^{|V_n|} \log{d_j}-\log{\sum\limits_{j=1}^{|V_n|} d_j} &\leqslant \sum\limits_{j=1}^{|V_n|} \log{d_j} \leqslant |V_n| \log{\left(\frac{\sum\limits_{j=1}^{|V_n|} d_j}{|V_n|}\right)}\\ 
&=|V_n| \log{\frac{2EV_n}{|V_n|}}=|V_n| \log{\frac{m^n |V_0|(|V_0|-1)}{|V_n|}}. 
\end{split}
\end{equation*}
Since $\lim_{n \rightarrow \infty} \frac{|V_n|}{m^n}=\frac{|V_1|-|V_0|}{m-1}$ we get an upper bound for $|V_n|^{-1}\log{\frac{\left(\prod\limits_{j=1}^{|V_n|} d_j\right)}{\left(\sum\limits_{j=1}^{|V_n|} d_j\right)}}$.
Now for the term $\log{ \prod_{j=1}^{|V_n|-1} \lambda_j} $, we know that the trace of the probabilistic graph Laplacian matrix equals $|V_n|$ and therefore as before
\begin{equation*}
\begin{split}
\sum\limits_{j=1}^{|V_n|-1} \log{\lambda_j}& \leqslant (|V_n|-1) \log{\left(\frac{\sum\limits_{j=1}^{|V_n|-1} \lambda_j}{|V_n|-1}\right)}=(|V_n|-1) \log{\frac{|V_n|}{|V_n|-1}}\\ 
&= \log{\left(\frac{|V_n|}{|V_n|-1}\right)^{|V_n|-1}}\rightarrow \log{e}=1. 
\end{split}
\end{equation*}
Thus $|V_n|^{-1}\log{ \prod_{j=1}^{|V_n|-1} \lambda_j} \leqslant 0$ which concludes our proof.
\end{proof}

We now add the extra assumption of full symmetry and prove the main theorem.
\begin{proof}

We want to prove the existence of the limit of the sequence $\frac{\log{\tau(G_n)}}{|V_n|}$. We already have from the proposition above that the sequence is bounded. Therefore it suffices to check that we do not have any oscillatory behavior. By the full symmetry assumption, we can perform spectral decimation and $\tau(G_n)$ is given by equation (\ref*{eq:1}) and thus we obtain that 

\begin{equation}
\begin{split}
\log{\tau(G_n)}=&\log{\left| \frac{\prod\limits_{j=1}^{|V_n|} d_j}{\sum\limits_{j=1}^{|V_n|}d_j} \right| }+ \log{\left|\prod_{\alpha\in A}\alpha^{\alpha_n}\right|}\\
& +\log{\left|\prod_{\beta\in B} \beta^{\sum_{k=0}^n\beta_n^k}\right|}+\log{\left|\prod_{\beta\in B}\left(\frac{-Q(0)}{P_d}\right)^{\sum_{k=0}^n\beta_n^k\frac{d^k-1}{d-1}}\right|}
\end{split}
\end{equation}
Therefore it suffices to prove that the limit 
$\lim\limits_{n \rightarrow \infty} \frac{\log{\prod\limits_{j=1}^{|V_n|} d_j-\log{\sum\limits_{j=1}^{|V_n|}d_j}}}{|V_n|}$ exists and for each $\alpha \in A$ and $\beta \in B$ the limits 
$$\lim_{n \rightarrow \infty} \frac{\alpha_n}{|V_n|}, \, \lim_{n \rightarrow \infty} \frac{\sum_{k=0}^n\beta_n^k}{|V_n|} \, \text{ and } \, \lim_{n \rightarrow \infty} \frac{\sum_{k=0}^n\beta_n^k \frac{d^k-1}{d-1}}{|V_n|}$$
also exist.
We know that even though $\Delta_n$ is not in general a symmetric operator it has only real eigenvalues. Then, including multiplicities, their total number must be equal to the dimension of the space and thus from equation (\ref*{eq:spectrum}) we have the formula
$$\sum_{\alpha\in A}{\alpha_n} + \sum_{\beta \in B}\sum_{k=0}^n\beta_n^k d^k +1=|V_n| .$$
Since $\alpha_n$ , $\beta_n^k$ are non-negative integers we see that for each $\alpha \in A $ and $\beta \in B$ that $ \frac{\alpha_n}{|V_n|}$, $\frac{\sum_{k=0}^n\beta_n^k d^k}{|V_n|}$ must be bounded and thus the same holds for $\frac{\sum_{k=0}^n\beta_n^k}{|V_n|}$ and  $\frac{\sum_{k=0}^n\beta_n^k \frac{d^k-1}{d-1}}{|V_n|}$.
Now, for a given $\alpha \in A$, we have that by the definition of the finite set $A$ that the multiplicities $\alpha_n=\text{mult}_n(\alpha)$ which can be found from Proposition 1.3. above depend only on the eigenvalue $\alpha$ and the level $n$ and in each of the cases of the Proposition we have convergence as $|V_n| \sim m^n$.
Now for the remaining limits. Take $\beta \in B$ and $\beta_n^k=\text{mult}_n(R_{-k}(\beta))$. By the general algorithm of the spectral decimation methodology, we have that every pre-iterate of the spectral decimation rational function preserves the multiplicity of the eigenvalues. Therefore, we have that $\beta_{n+1}^k=\beta_n^{k-1}$ for $1 \leq k \leq n+1$ and thus the sum of multiplicities at level $n+1$ must be the sum of the multiplicities at level $n$ along with those with generation of birth $n+1$. This is just the following formula 
$$\sum_{k=0}^{n+1}\beta_{n+1}^k=  \sum_{k=1}^{n+1}\beta_{n+1}^k+\beta_{n+1}^0 =\sum_{k=1}^{n+1}\beta_{n}^{k-1} +\beta_{n+1}^0 =\sum_{k=0}^{n}\beta_{n}^k+\beta_{n+1}^0$$ and
$$\sum_{k=0}^{n+1}\beta_{n+1}^k d^k=  \sum_{k=1}^{n+1}\beta_{n}^k d^k+\beta_{n+1}^0 =\sum_{k=1}^{n+1}\beta_{n}^{k-1} d^k +\beta_{n+1}^0 =d \sum_{k=0}^{n}\beta_{n}^k d^k+\beta_{n+1}^0$$ By taking into account that $\frac{|V_{n+1}|}{|V_{n}|} \rightarrow m$ and by looking at the Proposition 1.3. above, we have a list of possible choices for the term $\beta_{n+1}^0$ and as similarly to the case of the eigenvalues in the set $A$ before it must be that $\frac{\beta_{n+1}^0}{V_{n+1}}$ converges to a finite positive constant, which we can call $c$.

For a general first order linear recurrence $S_{n+1}=f_n S_n+g_n$ we know that it has solution
$$S_n=\left( \prod_{k=0}^{n-1}f_k\right) \left(A + \sum_{m=0}^{n-1} \frac{g_m}{\prod_{k=0}^m f_k} \right)$$ where $A$ is a constant.
From the arguments above, we have that $V_{n+1}=y_n V_n$ where $y_n$ is a sequence such that $y_n \rightarrow m$ and $\frac{\beta_{n+1}^0}{V_{n+1}}=c+x_n$ with $x_n$ being a sequence such that $x_n \rightarrow 0$. Then for $S_n= \frac{\sum_{k=0}^n\beta_n^k d^k}{|V_n|} $ we obtain that 
$S_{n+1}=\frac{d}{y_n}S_n+c+x_n$. Since we know that $S_n$ is bounded, it must be that $\frac{d}{y_n} \leq 1-\epsilon$ for some $\epsilon >0 $ and large $n$. Then
$$S_n=\left( \prod_{k=0}^{n-1}\frac{d}{y_k}\right) \left(A + \sum_{i=0}^{n-1} \frac{c+x_i}{\prod_{k=0}^i \frac{d}{y_k}} \right)$$ 
We care about the limit of $n\rightarrow \infty$ so the constant part becomes $0$ and we are left with 
$$c d^n \frac{\sum_{i=0}^{n-1} \prod_{k=0}^i \frac{y_k}{d}}{\prod_{k=0}^{n-1}y_k}  + d^n \frac{\sum_{i=0}^{n-1} x_i \prod_{k=0}^i \frac{y_k}{d}}{ \prod_{k=0}^{n-1}y_k}  $$
The second summand goes to $0$ as can be seen by the Stolz-Ces\`{a}ro lemma in the following way. Due to the fact that  $\frac{d}{y_n} \leq 1-\epsilon$  we have that $ \prod_{k=0}^{n-1}\frac{y_k}{d}$ is a strictly increasing sequence diverging to $+\infty$. Then,
$$\frac{\sum_{i=0}^{n} x_i \prod_{k=0}^i \frac{y_k}{d}-\sum_{i=0}^{n-1} x_i \prod_{k=0}^i \frac{y_k}{d}}{\prod_{k=0}^n \frac{y_k}{d}-\prod_{k=0}^{n-1} \frac{y_k}{d}}=\frac{x_n \prod_{k=0}^n \frac{y_k}{d}}{\prod_{k=0}^{n-1} \frac{y_k}{d}(\frac{y_n}{d}-1)} \rightarrow 0$$
since $y_n \rightarrow m$ and $x_n \rightarrow 0$.

The first summand is just $\sum_{i=0}^{n-1} d^{n-i+1} \prod_{k=i+1}^{n-1} \frac{1}{y_k}$ which is a positive series and since $S_n$ is bounded, it must be that it converges. Thus we get existence of  $\lim_{n \rightarrow \infty} \frac{\sum_{k=0}^n\beta_n^k d^k}{|V_n|}$.
By an exact similar argument, or more easily by the Stolz-Ces\`{a}ro lemma, we have the existence of the limit $\lim_{n \rightarrow \infty} \frac{\sum_{k=0}^n\beta_n^k }{|V_n|}$ and thus also we get that $\lim_{n \rightarrow \infty} \frac{\sum_{k=0}^n\beta_n^k \frac{d^k-1}{d-1}}{|V_n|}$ exists.

We have that $ |V_n| ^ {-1} \log{\left| \frac{\prod\limits_{j=1}^{|V_n|} d_j}{\sum\limits_{j=1}^{|V_n|}d_j} \right| }$ is bounded and that $\lim_{n \rightarrow \infty}\frac{\log{\sum\limits_{j=1}^{|V_n|}d_j}}{|V_n|}=0$. Moreover the limit $\lim_{n \rightarrow \infty}\frac{\log{\prod\limits_{j=1}^{|V_n|} d_j}}{|V_n|}$ cannot oscillate due to the symmetry of the fractal graph and thus obviously exists as it's bounded.
Thus all the required limits exist and we obtain our result.

Now, to bound the limit from below. Regarding the number of vertices, we have the following bound
$$|V_n| \leqslant m^n (|V_0|-1)+1.$$
This follows due to the fact that the $G_n$ graph is $m^n$ copies of the $G_0$ one and therefore we obviously have that $|V_n| \leqslant m^n |V_0|$. However, due to connectivity, some vertices need to overlap. At minimum, one vertex from each $0$-cell will overlap which would mean that 
$$|V_n| \leqslant m^n |V_0|-1-1-...-1$$
with the number of $-1$ being as many times as the cells minus one, namely $m^n-1$ which would give us $ |V_n| \leqslant m^n |V_0| -m^n +1$.
  
\noindent Then we have the following,
$$\frac{1}{|V_n|}\geq \frac{1}{ m^n (|V_0|-1)+1}=\frac{1}{ m^n\left(|V_0|-1+\frac{1}{m^n}\right)}$$
Then by the above inequality (2.1), we get that
$$\frac{\log(\tau(G_n))}{|V_n|} \geq \frac {m^n (|V_0|-2) \log(|V_0|)}{m^n\left(|V_0|-1+\frac{1}{m^n}\right)}$$
and thus 
$$ \lim_{n \rightarrow \infty} \frac{\log(\tau(G_n))}{|V_n|} \geq \frac{(V_0-2)\log|V_0|}{|V_0|-1}.$$
However, since $|V_0|$ is an integer strictly greater than two and we can define the function $f:[3,+\infty] \rightarrow \mathbb{R} , \; f(x)= \frac{(x-2)\log{x}}{x-1}$ and observe that it has a global minimum at $x=3$ and therefore 
$$ \lim_{n \rightarrow \infty} \frac{\log(\tau(G_n))}{|V_n|} \geq \frac{\log3}{2}.$$
Thus the asymptotic complexity constant must be at least $\frac{\log3}{2}$ which concludes the proof.

\end{proof}
  
Lastly, we illustrate with an example how this methodology can be applied by evaluating the number of spanning trees on the level $3$ Sierpinski Gasket. This has been previously evaluated for example in \cite{CCY07}, \cite{TeWa11} but we reevaluate it here in a different way using the methodology of \cite{anema}. For a variety of different examples we refer the reader to \cite{anema}.

\begin{prop}
The number of spanning trees on the level $n$ graph approximation of $SG_3$ is given for $n \geq 2$ by
$$\tau(G_n)=2^{a_n}3^{b_n}5^{c_n}7^{d_n}$$
where $a_n=\frac{2}{5}(6^n-1)$, $b_n=\frac{1}{25}(13 \cdot 6^n-15n+12)$, $c_n=\frac{1}{25}(3 \cdot 6^n-15n-3)$, $d_n= \frac{1}{25}(7\cdot 6^n+15n-7)$.
\end{prop}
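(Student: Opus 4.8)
The plan is to apply the Kirchhoff--spectral-decimation formula \eqref{eq:1} directly to $SG_3$, for which $m=6$, $|V_0|=3$, and $|V_1|=10$, so that $|V_n|=\frac{7\cdot 6^n+8}{5}$. Two pieces of data are needed: the degree sequence of $G_n$ (for the factor $\prod_j d_j/\sum_j d_j$) and the full spectral-decimation data of the probabilistic Laplacian (for the eigenvalue products). Since $SG_3$ is fully symmetric under the dihedral group $D_3$ acting on $V_0$, the Schur-complement proposition applies and the spectrum of $\Delta_n$ is governed by the recursive multiplicity formulas stated above; the entire computation then reduces to solving linear recurrences and collecting prime factors.

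First I would pin down the degree sequence. Every vertex of $G_n$ has degree $2c$, where $c$ is the number of smallest cells containing it, and in $SG_3$ the only possibilities are $c\in\{1,2,3\}$: the three outer corners ($c=1$, degree $2$), the points where two cells meet ($c=2$, degree $4$), and the points where three cells meet ($c=3$, degree $6$). Writing $p_n,q_n,r_n$ for the respective counts, one has $p_n=3$ for all $n$, together with $p_n+q_n+r_n=|V_n|$ and $\sum_j d_j=2|E_n|=m^n|V_0|(|V_0|-1)=6^{\,n+1}$. Solving gives $r_n=\frac{6^n-1}{5}$ and $q_n=\frac{6(6^n-1)}{5}$, whence
$$\frac{\prod_{j}d_j}{\sum_j d_j}=\frac{2^{\,3+2q_n+r_n}\,3^{\,r_n}}{6^{\,n+1}},$$
which already isolates the contribution of this factor to the exponents of $2$ and $3$ (and to nothing else).

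The computational heart is the spectral-decimation data. Here I would write $\Delta_1$ in the block form of the Schur-complement proposition, compute the $7\times7$ interior block $D$ (its spectrum being read off by exploiting the $D_3$-symmetry acting on the seven interior vertices, namely the fixed centre together with the six non-central ones), and from $S(z)=\phi(z)(P_0-R(z))$ extract the rational function $R(z)=P(z)/Q(z)$, its degree $d$, the exceptional set, the constant $-Q(0)/P_d$, and the eigenvalue sets $A$ and $B$. One checks that all the eigenvalues involved, together with $-Q(0)/P_d$, are built only from the primes $2,3,5,7$. With these in hand, the multiplicity recursions give $\alpha_n$ in closed form for each $\alpha\in A$, while for each $\beta\in B$ the relation $\beta_{n+1}^k=\beta_n^{k-1}$ reduces everything to the birth multiplicities $\beta_j^{0}$, via $\sum_{k=0}^n\beta_n^k=\sum_{j=0}^n\beta_j^{0}$ and $\sum_{k=0}^n\beta_n^k d^k=\sum_{j=0}^n\beta_j^{0}d^{\,n-j}$; solving the resulting first-order recurrences exactly as in the proof of the main theorem yields closed forms whose leading behaviour is $\propto 6^n$ and whose subleading part supplies the terms linear in $n$ appearing in $b_n,c_n,d_n$.

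Finally I would substitute the degree factor and the multiplicity closed forms into \eqref{eq:1}, take logarithms, and group the powers of $2,3,5,7$. The linear-in-$n$ contributions come from the exponent $\sum_{k}\beta_n^k\frac{d^k-1}{d-1}$ of the constant $-Q(0)/P_d$, and these must cancel the $-n$ coming from the degree factor so that the power of $2$ retains no linear term, matching $a_n=\frac25(6^n-1)$; the remaining bookkeeping produces $b_n,c_n,d_n$. I expect the main obstacle to be the third step: correctly diagonalising $D$ and assembling the exact $SG_3$ decimation data (the function $R$, the case-by-case birth multiplicities selected from the multiplicity proposition, and the constant $-Q(0)/P_d$), since every downstream exponent depends on these. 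A secondary subtlety is that the closed forms hold only for $n\ge2$, as the multiplicity recursions and the degree distribution settle into their stable pattern only after the first level, which is exactly the hypothesis of the statement.
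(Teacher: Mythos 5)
Your plan is essentially the paper's own proof: compute the degree factor explicitly, assemble the spectral decimation data for $SG_3$, substitute into equation (\ref{eq:1}), and collect prime powers. Your degree bookkeeping is exactly right and matches the paper ($p_n=3$, $r_n=\frac{6^n-1}{5}$ central vertices of degree $6$, $q_n=\frac{6(6^n-1)}{5}$ of degree $4$, $\sum_j d_j=6^{n+1}$), as is your reduction of the multiplicity sums to birth multiplicities via $\beta_n^k=\beta_{n-k}^0$. The one genuine difference of route is at what you correctly identify as the computational heart: you propose to re-derive $R(z)$, the exceptional set, and the birth multiplicities from the Schur complement of $\Delta_1$ (diagonalising the $7\times 7$ interior block by symmetry), whereas the paper simply quotes this data from Bajorin et al.\ \cite{Baj2}: $R(z)=\frac{6z(z-1)(4z-5)(4z-3)}{6z-7}$, $d=4$, $-Q(0)/P_d=7/96$, $A=\{\frac32\}$, $B=\{1,\frac34,\frac54,\frac{3\pm\sqrt2}{4}\}$ with explicit $\alpha_n$ and $\beta_n^k$, and then evaluates five explicit exponent sums directly rather than solving recurrences. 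Citing the known data is what makes the paper's proof short; your derivation-from-scratch is feasible but is left entirely unexecuted in the proposal, and every exponent in the conclusion depends on it.

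One concrete inaccuracy to fix: your claim that ``all the eigenvalues involved $\ldots$ are built only from the primes $2,3,5,7$'' is false, since $B$ contains the irrational conjugate pair $\frac{3\pm\sqrt2}{4}$. The statement survives only because these two eigenvalues share the identical multiplicity sequence $\beta_n^k=\frac{2\cdot 6^{n-k-1}+8}{5}$ (with $\beta_n^n=0$), so their contributions pair up into powers of $\frac{3+\sqrt2}{4}\cdot\frac{3-\sqrt2}{4}=\frac{7}{16}$; this Galois pairing, visible in equation (\ref{eqn:sierpbeta}) of the paper, is the actual reason $\tau(G_n)$ factors over $\{2,3,5,7\}$, and your argument should state it rather than assume rationality of the spectrum. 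Your consistency check that the linear-in-$n$ terms cancel in the exponent of $2$ is correct ($a_n=\frac25(6^n-1)$ indeed has no linear term, while the degree factor contributes $-n$ to both the $2$- and $3$-exponents), and the restriction $n\ge 2$ arises, as you suspect, from the degenerate range of the multiplicity formulas at small $n$ rather than from the degree distribution, which is already stable at $n=1$.
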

\begin{proof}
To calculate the term $\frac{\prod_{j=1}^{|V_n|} d_j}{\sum_{j=1}^{|V_n|}d_j}$ it suffices to observe that in $SG_3$ the boundary points have degree $2$, the central ones in each cell have degree $6$ and all the rest have degree $4$. Moreover, we know that $|V_n|= 3+ \frac{7}{5}(6^n-1)$. Thus this gives us that
$$ \frac{\prod_{j=1}^{|V_n|} d_j }{\sum_{j=1}^{|V_n|}d_j }    = \frac{2^3 \cdot 4 ^{\frac{7}{5}(6^n-1)-\sum_{k=1}^n 6^{n-k}} \cdot 6^{\sum_{k=1}^n 6^{n-k}} }{4[3+\frac{7}{5}(6^n-1)]-6+2\cdot \sum_{k=1}^n 6^{n-k}} = 2^{-\frac{3}{5}+\frac{13}{5}6^n-n} \cdot 3^{\frac{6^n-6}{5}-n}$$
The spectrum of $\Delta_n$ has been evaluated in \cite{Baj2} using spectral decimation. The rational function of the spectral decimation algorithm is $R(z)= \frac{6z(z-1)(4z-5)(4z-3)}{6z-7}$ and thus $d=4$, $P_d=6\cdot 4^2$ and $Q(0)=-7$. We also have that $A= \{ \frac{3}{2} \} $ and $B= \{ 1, \frac{3}{4} , \frac{5}{4} , \frac{3+ \sqrt{2} }{4} , \frac{3- \sqrt{2}}{4}  \}$. For the corresponding multiplicities we have that for $\alpha =\frac{3}{2}$, $\alpha_n = \frac{2 \cdot 6^n +8}{5}$.  For $\beta_n^k =1$ , $\beta_n^k =1$. For $\beta = \frac{3}{4}$ or $\beta =\frac{5}{4}$ , $\beta_n^k = \frac{3}{5}(6^{n-k-1}-1) , \beta_n^{n-1}=\beta_n^n=0$. For $ \beta = \frac{3+\sqrt{2}}{4}$ or $\beta = \frac{3-\sqrt{2}}{4}$ we have that $\beta_n^k =\frac{2\cdot 6^{n-k-1}+8}{5} , \beta_n^n =0$. The eigenvalue $1$ has multiplicity one. Then
\begin{equation}\label{eqn:sierpbeta}
\begin{split}
\prod_{\alpha\in A} \alpha^{\alpha_n} & \prod_{\beta\in B}\left(\beta^{\sum_{k=0}^n\beta_n^k}\cdot\left(\frac{7}{4^2 \cdot 6}\right)^{\sum_{k=0}^n\beta_n^k\frac{4^k-1}{3}}\right)= \left(\frac{3}{2}\right)^{\displaystyle \frac{2 \cdot 6^n+8}{5}}\times\\
&\times \left(\frac{3}{4}\right)^{\displaystyle \sum_{k=0}^{n-2}\frac{3}{5}(6^{n-k-1}-1)}\times\left(\frac{7}{4^2 \cdot 6}\right)^{\displaystyle \sum_{k=0}^{n-2}\frac{3}{5}(6^{n-k-1}-1) \frac{4^k-1}{3}}\\
&\times\left(\frac{5}{4}\right)^{\displaystyle \sum_{k=0}^{n-2}\frac{3}{5}(6^{n-k-1}-1)}\times\left(\frac{7}{4^2 \cdot 6}\right)^{\displaystyle \sum_{k=0}^{n-2}\frac{3}{5}(6^{n-k-1}-1) \frac{4^k-1}{3}}\\
&\times\left(\frac{3+\sqrt{2}}{4}\right)^{\displaystyle \sum_{k=0}^{n-1}\frac{2\cdot 6^{n-k-1}+8}{5}}\times\left(\frac{7}{4^2 \cdot 6}\right)^{\displaystyle \sum_{k=0}^{n-1}\frac{2\cdot 6^{n-k-1}+8}{5} \cdot \frac{4^k-1}{3}}\\
&\times\left(\frac{3-\sqrt{2}}{4}\right)^{\displaystyle \sum_{k=0}^{n-1}\frac{2\cdot 6^{n-k-1}+8}{5}}\times\left(\frac{7}{4^2 \cdot 6}\right)^{\displaystyle \sum_{k=0}^{n-1}\frac{2\cdot 6^{n-k-1}+8}{5} \cdot \frac{4^k-1}{3}}\\
&\times 1 \cdot \left( \frac{7}{4^2 \cdot 6} \right) ^{\sum_{k=0}^{n-1}\frac{4^{k}-1}{3}}\\
\end{split}
\end{equation}
By calculating the sums above we obtain that
\begin{align*}
&\sum_{k=0}^{n-2}\frac{3}{5}(6^{n-k-1}-1)=\frac{1}{25}\left(3\cdot 6^n-15n-3\right)\\
&\sum_{k=0}^{n-2}\frac{3}{5}(6^{n-k-1}-1) \frac{4^k-1}{3}=\frac{1}{5}\left(\frac{9}{5}\cdot 6^{n-1}-\frac{10}{3} \cdot 4^{n-1}+n+\frac{8}{15}\right)\\
&\sum_{k=0}^{n-1}\frac{2 \cdot 6^{n-k-1}+8}{5}=\frac{2}{25}\left(6^n+20n-1\right)\\
&\sum_{k=0}^{n-1}\left(\frac{2 \cdot 6^{n-k-1}+8}{5}\right)\frac{4^k-1}{3}=\frac{6^n-4^n}{15} + \frac{8 (4^n-1)}{45} - \frac{2}{75}\left(6^n+20n-1\right)\\
&\sum_{k=0}^{n-1}\frac{4^{k}-1}{3}=\frac{4^n-1}{9}-\frac{n}{3}.
\end{align*}
Then by combining those equations above, and doing some elementary calculations we obtain our result.
\end{proof}

\begin{remark} 
The asymptotic complexity constant is  $ C_{asympt} =\frac{2}{7} \log2 +\frac{13}{35}\log3 + \frac{3}{35} \log5 + \frac{1}{5} \log7$. 
\end{remark}

\section*{Acknowledgements}

We are grateful to Alexander Teplyaev, Robert S. Strichartz, Anders Karlsson and Anders \"Oberg, for valuable suggestions.

\end{document}